\documentclass[12pt]{article}
\usepackage{graphicx}
\usepackage{amssymb,amsmath,amsthm}
\usepackage[latin1]{inputenc}
\usepackage[english]{babel}
\pagestyle{plain}
\topmargin0cm
\oddsidemargin1cm
\setlength{\textwidth}{14.8cm}
\setlength{\parskip}{1ex}

\newtheorem{theorem}{Theorem}[section]
\newtheorem{lemma}[theorem]{Lemma}
\newtheorem{question}[theorem]{Question}

\theoremstyle{remark}
\theoremstyle{remark}
\theoremstyle{remark}
\theoremstyle{remark}

\begin{document}

\title{$(k+1)$-sums versus $k$-sums}

\author{Simon Griffiths\, \\
  \small{IMPA, Est. Dona Castorina 110, Jardim Bot\^anico, Rio de Janeiro, Brazil}}

\maketitle

{\renewcommand{\thefootnote}{\relax} \footnotetext{
Email\,: 
    \texttt{sgriff@impa.br}}}

\begin{abstract} A $k$-sum of a set $A\subseteq \mathbb{Z}$ is an integer that may be expressed as a sum of $k$ distinct elements of $A$.  How large can the ratio of the number of $(k+1)$-sums to the number of $k$-sums be?  Writing $k\wedge A$ for the set of $k$-sums of $A$ we prove that
\[
\frac{|(k+1)\wedge A|}{|k\wedge A|}\, \le \, \frac{|A|-k}{k+1}
\]
whenever $|A|\ge (k^{2}+7k)/2$. The inequality is tight -- the above ratio being attained when $A$ is a geometric progression.  This answers a question of Ruzsa.\end{abstract}

\section{Introduction}

%larger We write $k\wedge A$ for the set of $k$-sums of $A$.Given $k\in \mathbb{N}$ and a set $A=\{a_{1},\dots ,a_{n}\}$ of $n$ integers, we define $S_{k}=S_{k}(A)=\{\sum_{I}a_{i}:I\in [n]^{(k)}\}$, i.e. $S_{k}$ is the set of integers that may be expressed as a sum of $k$ elements of $A$ (with repetitions not allowed).

Given a set $A=\{a_{1},...,a_{n}\}$ of $n$ integers we denote by $k\wedge A$ the set of integers which may be represented as a sum of $k$ distinct elements of $A$.  In this paper we consider the problem of how large the ratio $|(k+1)\wedge A|/|k\wedge A|$ can be.  The upper bound 
\begin{equation}\label{eq:old}
\frac{|(k+1)\wedge A|}{|k\wedge A|}\, \le \, \frac{n}{k+1}
\end{equation}
is easily obtained using a straightforward double-counting argument.

Ruzsa~\cite{R} asked whether this inequality may be strengthened to 
\[
\frac{|(k+1)\wedge A|}{|k\wedge A|}\, \le \, \frac{n-k}{k+1}
\]
whenever $n$ is large relative to $k$.  We confirm that this is indeed the case.

\begin{theorem}\label{thm:main}  Let $A$ be a set of $n$ integers and suppose that $n\ge (k^{2}+7k)/2$.  Then 
\begin{equation}\label{eq:main}
\frac{|(k+1)\wedge A|}{|k\wedge A|}\, \le \, \frac{n-k}{k+1} \, \, .
\end{equation}
\end{theorem}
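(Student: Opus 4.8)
\medskip
\noindent\textbf{Proof strategy.}
The plan is to recast \eqref{eq:main} in a symmetric ``defect'' form. For $0\le j\le n$ let $r_j(x)$ denote the number of $j$-element subsets of $A$ with sum $x$, so that $\sum_x r_j(x)=\binom nj$, and put
\[
\mu_j\ :=\ \binom nj-|j\wedge A|\ =\ \sum_{x\in j\wedge A}\bigl(r_j(x)-1\bigr)\,,
\]
the number of $j$-sums ``missing'' relative to the case in which all subset sums are distinct. Since $(k+1)\binom{n}{k+1}=(n-k)\binom nk$, inequality \eqref{eq:main} is equivalent to
\[
\mu_{k+1}\ \ge\ \frac{n-k}{k+1}\,\mu_k\,,
\]
that is, to the statement that the defect grows by at least the generic factor $\binom{n}{k+1}/\binom nk$ on passing from $k$ to $k+1$; the geometric progression, for which $\mu_k=\mu_{k+1}=0$, attains equality. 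This is the form I would try to establish.

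The mechanism is that every coincidence $\sum R=\sum R'$ among distinct $k$-subsets of $A$ propagates upward: for each $a\notin R\cup R'$ one has $\sum(R\cup\{a\})=\sum(R'\cup\{a\})$. A natural way to exploit this is an induction on $k$, with $n$ decreasing, obtained by deleting the largest element $a_n$ of $A$ and writing $A'=A\setminus\{a_n\}$. Classifying $j$-subsets according to whether they contain $a_n$ gives $\binom nj=\binom{n-1}{j}+\binom{n-1}{j-1}$ and $|j\wedge A|=|j\wedge A'|+|(j-1)\wedge A'|-\delta_j$, where
\[
\delta_j\ :=\ \bigl|\,(j\wedge A')\cap(a_n+(j-1)\wedge A')\,\bigr|\ =\ \bigl|\{\,y\in(j-1)\wedge A'\ :\ y+a_n\in j\wedge A'\,\}\bigr|
\]
is the overlap between the two parts of $j\wedge A$; hence $\mu_j(A)=\mu_j(A')+\mu_{j-1}(A')+\delta_j$. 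Substituting the inductive hypotheses $\mu_{k+1}(A')\ge\frac{(n-1)-k}{k+1}\mu_k(A')$ and $\mu_k(A')\ge\frac{n-k}{k}\mu_{k-1}(A')$ into these identities, a short computation collapses \eqref{eq:main} for $A$ to the single \emph{overlap inequality}
\[
\delta_{k+1}\ \ge\ \frac{n-k}{k+1}\,\delta_k\,,
\]
which is once more a $(k+1)$-versus-$k$ bound, now for the matched sets above.

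To prove the overlap inequality one argues directly. To a $y$ counted by $\delta_k$ attach a representation of $y$ as a $(k-1)$-sum of $A'$ and a representation of $y+a_n$ as a $k$-sum of $A'$; adjoining to both an element $b\in A'$ occurring in neither representation yields $y'=y+b$ with $y'\in k\wedge A'$ and $y'+a_n\in(k+1)\wedge A'$, so that $y'$ is counted by $\delta_{k+1}$. One then bounds $\delta_{k+1}$ from below by an injection of this type, controlling how often a given $y'$ is produced. The main obstacle is exactly where this count is wasteful: when the two chosen representations overlap heavily --- equivalently, when the relevant $k$-sum has many representations --- adjoining a fresh element buys only about $n-2k$ new coincidences instead of $n-k$, and a sum of high representation multiplicity contributes little to $\mu$ despite spawning many pairs. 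Getting past this forces a careful, perhaps canonical, choice of representation so that the multiplicity of the image is bounded well below the crude value $\approx k$, and it is here that the hypothesis $n\ge(k^2+7k)/2$ is consumed: a naive multiplicity bound would only deliver the result for $n$ of size roughly $k^2$ rather than $k^2/2$. Finally the base cases need a separate, self-contained treatment: $k\le 1$ is trivial since then $\mu_1=\mu_0=0$, while for each $k\ge 2$ the least admissible value of $n$ --- at which the induction cannot reduce $n$ --- must be handled directly, and it is there that one checks $(k^2+7k)/2$ to be the correct threshold.
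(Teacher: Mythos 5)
Your reformulation is sound: since $(k+1)\binom{n}{k+1}=(n-k)\binom{n}{k}$, inequality \eqref{eq:main} is indeed equivalent to $\mu_{k+1}\ge\frac{n-k}{k+1}\mu_k$, and the decomposition $\mu_j(A)=\mu_j(A')+\mu_{j-1}(A')+\delta_j$ is correct. The fatal problem is the ``overlap inequality'' $\delta_{k+1}\ge\frac{n-k}{k+1}\delta_k$ to which you reduce everything: it is false, so no canonical choice of representations or multiplicity bookkeeping can rescue the final step. Take $A=\{1,\dots,n\}$ with $k=2$ and delete the largest element, so $A'=\{1,\dots,n-1\}$ and $a_n=n$. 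Here $(j-1)\wedge A'$ and $j\wedge A'$ are intervals of consecutive integers, and a direct count gives $\delta_j=(j-1)(n-1-j)$; hence $\delta_2=n-3$ and $\delta_3=2(n-4)$, while the overlap inequality demands $\delta_3\ge\frac{n-2}{3}(n-3)$, which grows quadratically in $n$. For $n=101$ this reads $194\ge 3234$. This is not a base case you could excise: $n-1=100$ comfortably exceeds the threshold $(k^2+7k)/2=9$, so your inductive step must apply to this instance, and it fails.

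The counterexample does not contradict the theorem because your reduction discards exactly the slack that saves it: to isolate the overlap inequality you use $\mu_{k+1}(A')\ge\frac{n-1-k}{k+1}\mu_k(A')$ as though it were essentially an equality, whereas for an arithmetic progression $\mu_{k+1}(A')$ exceeds this bound by a quantity of order $\binom{n}{k+1}$, and it is that surplus, not $\delta_{k+1}$, that carries the inequality for $A$. Repairing the argument would require propagating a quantitative lower bound on this surplus through the induction, which is a genuinely different and stronger statement than \eqref{eq:main}. For comparison, the paper avoids induction altogether: it refines the double count on the bipartite extension graph $G$ between $k\wedge A$ and $(k+1)\wedge A$ by splitting each side into uniquely and multiply represented sums, showing that uniquely represented $k$-sums have degree at most $n-k$, that multiply represented $(k+1)$-sums have degree at least $k+3$, and coupling the two sides via an auxiliary subgraph $H$ with $e(H)\ge(n-2k)|S|$; the hypothesis $n\ge(k^2+7k)/2$ is consumed in a single step, to absorb the error term $k|S|$.
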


Since the ratio $(n-k)/(k+1)$ is obtained for all $k$ in the case that $A$ is a geometric progression this result is best possible for each pair $k,n$ covered by the theorem.  However, we do not believe that $n\ge (k^2 +7k)/2$ is a necessary condition for inequality \eqref{eq:main}.  Indeed, we pose the following question.

\begin{question} Does \eqref{eq:main} hold whenever $n>2k$?\end{question}

The inequality $n>2k$ is necessary.  Indeed, for any pair $k,n$ with $n/2\le k\le n-1$ the inequality \eqref{eq:main} fails for the set $A=\{1,\dots ,n\}$ (or indeed any arithmetic progression of length $n$).  To see this note that $|k\wedge A|=k(n-k)+1$ for each $k=1,\dots ,n$, and that the inequality 
\[
\frac{(k+1)(n-k)+1}{k(n-k)+1}\, \le \, \frac{n-k}{k+1}
\]
holds if and only if $k\le (n-1)/2$.  Thus, we have also verified for the case that $A$ is an arithmetic progression that \eqref{eq:main} holds whenever $n>2k$.  We also note for any set $A\subseteq \mathbb{Z}$ that \eqref{eq:main} holds trivially (and with equality) in the case that $k=(n-1)/2$.  Indeed this follows immediately from the symmetry $|k\wedge A|=|(n-k)\wedge A|\, , k=1,\dots ,n-1$.

\section{Proof of Theorem \ref{thm:main}} \label{sec:main}

The proof of Theorem \ref{thm:main} is closely related to the double-counting argument one uses to prove \eqref{eq:old}.  We recall that argument now.

Fix $k\in \{0,\dots ,n-1\}$ and a set $A=\{a_{1},\dots ,a_{n}\}$ of $n$ integers.  We say that an element $s\in k\wedge A$ \emph{extends} to $t\in (k+1)\wedge A$ if there exist distinct elements $a_1,\dots ,a_{k+1}$ of $A$ such that 
\[
s \, = \, a_1+\dots +a_k\qquad \text{and}\qquad t\, =\, a_1+\dots +a_{k+1}\, .
\]
Define the bipartite graph $G$ with vertex sets $U=\{u_s:s\in k\wedge A\}$ and $V=\{v_t:t\in (k+1)\wedge A\}$ and edge set
\[
E(G)\, =\, \{u_s v_t: s \, \text{extends to} \, t\}.
\]
We prove \eqref{eq:old} by counting $e(G)$ in two different ways:
\begin{itemize}
\item[]{(i)} $e(G)\le n|U|$, since each vertex $u_s\in U$ has at most $n$ neighbours in $V$.
\item[]{(ii)} $e(G)\ge (k+1)|V|$ since each vertex $v_t\in V$ is adjacent to each vertex $u_{t-a_i}:i=1,\dots ,k+1$, where $a_1+\dots +a_{k+1}$ is a $(k+1)$-sum to $t$.
\end{itemize}
Since $|U|=|k\wedge A|$ and $|V|=|(k+1)\wedge A|$ we obtain that
\[
(k+1)|(k+1)\wedge A|\, \le\, e(G)\, \le \, n |k\wedge A|\, ,
\]
completing the proof of \eqref{eq:old}.

The alert reader will note that the extremal cases of each of (i) and (ii) occur in rather different situations.  The inequality $e(G)\le n|U|$ may be tight only if each element $s\in k\wedge A$ extends to $s+a$ for all $a\in A$.  Equivalently, for each $s\in k\wedge A$ and $a\in A$, $s$ may be represented as a $k$-sum that does not use $a$, i.e., $s=a_1+\dots +a_k$ for distinct $a_1,\dots ,a_k\in A\setminus \{a\}$.  In particular, the inequality in $(i)$ may be tight only if each $k$-sum has at least two representations.  On the contrary, the second inequality $e(G)\ge (k+1)|V|$ may be tight only if each $t\in (k+1)\wedge A$ may be represented as a $(k+1)$-sum in a unique way.  This simple observation is the key to our proof.

We put the above observations into action by defining $Q_k \subseteq k\wedge A$ to be the set of $s\in k\wedge A$ that have a unique representation as a $k$-sum and $S=(k\wedge A) \setminus Q_k$ to be the set of $s$ with at least two representations.  We immediately obtain a new upper bound on $e(G)$, namely:
\begin{equation}\label{sthing}
e(G)\,\le \, (n-k)|U_k|+ n|S|\, =\, (n-k)|k\wedge A|+k|S|\,  .
\end{equation}
Correspondingly, one may define $Q_{k+1}$ to be the set of $t\in (k+1)\wedge A$ that are uniquely represented as a $(k+1)$-sum and $T=((k+1)\wedge A) \setminus Q_{k+1}$ to be the set of $t$ with at least two representations.  It then follows (using Lemma~\ref{lem:plus3} below) that
\begin{equation}\label{tthing}
e(G) \, \ge \, (k+1)|U_{k+1}|+(k+3)|T|\, = (k+1)|(k+1)\wedge A|+2|T|\, .
\end{equation}
Unfortunately \eqref{sthing} and \eqref{tthing} do not directly imply Theroem~\ref{thm:main} since it is non-trivial to relate $|S|$ and $|T|$.  For this reason we define a subgraph $H$ of $G$ as follows.  Recall that a pair $u_s v_t$ is an edge of $G$ if there exists a representation $s=a_1+\dots +a_k$ of $s$ as a $k$-sum of elements of $A$ and $a\in A\setminus \{a_1,\dots ,a_k\}$ such that $t=s+a$.  Include an edge $u_s v_t$ of $G$ in $H$ if and only if there exist two representations $s=a_1+\dots +a_k =b_1+\dots +b_k$ of $s$ as a $k$-sum of elements of $A$ and $a\in A\setminus (\{a_1,\dots ,a_k\}\cup \{b_1,\dots ,b_k\})$ such that $s+a=t$.  (Note: if an edge $u_s v_t$ of $G$ is included in $H$ then in particular $s\in S$ and $t\in T$.)

We begin with two lemmas.

\begin{lemma}\label{lem:hfroms} Let the set $S\subseteq k\wedge A$ and the graph $H\subseteq G$ be as defined above.  Then $e(H)\ge (n-2k)|S|$.\end{lemma}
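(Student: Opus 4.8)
The plan is to produce, for each $s \in S$, at least $n-2k$ distinct edges of $H$ incident to $u_s$; since an edge $u_s v_t \in H$ determines $s$ (as the endpoint in $U$), these edge-sets are disjoint for distinct $s$, and summing gives $e(H) \ge (n-2k)|S|$. So fix $s \in S$. By definition of $S$, $s$ has two distinct representations as a $k$-sum, say $s = a_1 + \dots + a_k = b_1 + \dots + b_k$ with $\{a_1,\dots,a_k\} \ne \{b_1,\dots,b_k\}$. An edge $u_s v_{s+a}$ lies in $H$ precisely when $a \in A$ avoids the union $\{a_1,\dots,a_k\} \cup \{b_1,\dots,b_k\}$; so the number of $a$ that work is at least $n - |\{a_1,\dots,a_k\} \cup \{b_1,\dots,b_k\}|$.

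The crux is therefore to bound $|\{a_1,\dots,a_k\} \cup \{b_1,\dots,b_k\}|$ from above by $2k$ — and for that we may simply use the trivial bound $|\{a_1,\dots,a_k\} \cup \{b_1,\dots,b_k\}| \le 2k$, valid for any two $k$-element sets. This immediately gives at least $n - 2k$ choices of $a \in A \setminus (\{a_1,\dots,a_k\} \cup \{b_1,\dots,b_k\})$, each producing an edge $u_s v_{s+a}$ of $H$. The one thing to check is that these edges are genuinely \emph{distinct}: different choices of $a$ give different sums $s+a$ (since $a \mapsto s+a$ is injective), hence different vertices $v_{s+a} \in V$, hence distinct edges incident to $u_s$. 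Thus $u_s$ has degree at least $n-2k$ in $H$.

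Finally, for distinct $s, s' \in S$ the edges counted above are distinct across the two vertices: an edge of $H$ of the form $u_s v_t$ has $u_s$ as its unique endpoint in $U$, so no edge is counted twice. Summing the degree lower bound over all $s \in S$ yields
\[
e(H) \;=\; \tfrac{1}{1}\sum_{s \in S} \deg_H(u_s) \;\ge\; (n-2k)|S|,
\]
as claimed. (There is nothing to worry about when $n \le 2k$, since then the bound is vacuous; for the range of Theorem~\ref{thm:main} we have $n \ge (k^2+7k)/2 > 2k$, so the estimate is meaningful.) The main obstacle is essentially bookkeeping — making sure the fixed pair of representations of $s$ is chosen once and for all so that the count of admissible $a$ is clean, and confirming the edges are distinct — rather than any substantive inequality, since the key numerical input is just the trivial $|X \cup Y| \le |X| + |Y|$.
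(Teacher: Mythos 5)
Your proof is correct and follows essentially the same route as the paper: fix two distinct $k$-sum representations of each $s\in S$, observe that every $a\in A$ outside their union (of size at most $2k$) yields an edge $u_s v_{s+a}$ of $H$, and sum the resulting degree bound over $S$. The only cosmetic quibble is that an edge lies in $H$ \emph{if} (not ``precisely when'') such an $a$ exists for the chosen pair of representations, but since you only need the sufficient direction for the lower bound, this does not affect the argument.
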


\begin{proof} For each $s \in S$ the vertex $u_s$ has degree at least $n-2k$ in $H$.  Indeed, writing $s=a_1+\dots +a_k=b_1+\dots +b_k$ we have that $u_s v_t\in E(H)$ for each $t\in \{s+a:a\in A\setminus (\{a_1,\dots ,a_k\}\cup \{b_1,\dots ,b_k\})\}$.\end{proof}

\begin{lemma}\label{lem:plus3} Let the set $T\subseteq (k+1)\wedge A$ be as defined above.  Then $d_G(v_t)\ge k+3$ for all $t\in T$.\end{lemma}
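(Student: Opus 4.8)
The plan is to read $d_G(v_t)$ as a count of elements of $A$. By the definition of $G$, a vertex $u_s$ is adjacent to $v_t$ precisely when there is a $(k+1)$-element subset $B\subseteq A$ with $\sum_{b\in B}b=t$ and an element $a\in B$ such that $s=t-a$; indeed $B\setminus\{a\}$ then witnesses $s=t-a\in k\wedge A$, and conversely such a $B$ and $a$ yield an edge $u_{t-a}v_t$. Since the map $a\mapsto t-a$ is injective, it follows that $d_G(v_t)$ equals the number of elements of $A$ lying in \emph{some} $(k+1)$-subset of $A$ that sums to $t$. In particular, for any two such subsets $B_1,B_2$ we have $d_G(v_t)\ge |B_1\cup B_2|$.

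Next I would invoke the hypothesis $t\in T$: by definition this gives two \emph{distinct} $(k+1)$-subsets $B_1\ne B_2$ of $A$, each summing to $t$. The one nontrivial observation is that two distinct subsets of equal size and equal sum cannot differ in exactly one element: if $B_1\setminus B_2=\{x\}$ and $B_2\setminus B_1=\{y\}$ then $0=\sum B_1-\sum B_2=x-y$, forcing $x=y$, a contradiction. Hence $|B_1\setminus B_2|=|B_2\setminus B_1|\ge 2$, so $|B_1\cap B_2|\le k-1$ and therefore
\[
|B_1\cup B_2|=2(k+1)-|B_1\cap B_2|\ge 2(k+1)-(k-1)=k+3.
\]
Combining this with the displayed inequality $d_G(v_t)\ge|B_1\cup B_2|$ from the first paragraph yields $d_G(v_t)\ge k+3$, which is the claim.

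There is no real obstacle here: the argument is just the two bookkeeping steps above, namely (i) that each element of $B_1\cup B_2$ produces a distinct neighbour of $v_t$ in $G$ — immediate from the injectivity of $a\mapsto t-a$ and the fact that removing an element from a $(k+1)$-subset summing to $t$ leaves a genuine $k$-sum — and (ii) the one-line cancellation argument showing that distinct equal-sum $(k+1)$-subsets must differ in at least two positions. I would present the proof in exactly these two steps.
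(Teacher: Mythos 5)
Your proof is correct and is essentially the paper's own argument: both identify the neighbours of $v_t$ with the elements of the union $B_1\cup B_2$ of two distinct $(k+1)$-subsets summing to $t$ (via the injective map $a\mapsto t-a$), and both use the observation that such subsets cannot share exactly $k$ elements to get $|B_1\cup B_2|\ge k+3$. No gaps; the write-up is if anything slightly more explicit than the paper's.
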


\begin{proof} An element $t\in T$ has at least two representations $t=a_{1}+\dots +a_{k+1}=b_1+\dots +b_{k+1}$ as a $(k+1)$-sum of elements of $A$.  Furthermore the sets $\{a_1,\dots ,a_{k+1}\},\{b_1,\dots ,b_{k+1}\}$ cannot have precisely $k$ common elements (as in that case they would have different sums).  It follows that the set $B=\{a_1,\dots ,a_{k+1}\}\cup \{b_1,\dots ,b_{k+1}\}$ has cardinality at least $k+3$.  The proof is now complete since $u_s v_t$ is an edge of $G$ for each $s\in \{t-b:b\in B\}$.\end{proof}

Combining Lemma~\ref{lem:plus3} with the trivial bound $d_G(v_t)\ge d_H(v_t)$ for each $t\in T$, we deduce that
\begin{equation}\label{eq:realt}
d_G(v_t)\,\ge\, \frac{k+1}{k+3}(k+3)+\frac{2}{k+3} d_H(v_t)\, = \, k+1+\frac{2 d_H(v_t)}{k+3}\, .
\end{equation}
Consequently,
\[
e(G)\, \ge \, (k+1)|Q_{k+1}|+\sum_{t\in T} \,\, \left( (k+1)+\frac{2 d_H(v_t)}{k+3}\right) \, = \, (k+1)|(k+1)\wedge A|+\frac{2e(H)}{k+3}\,.
\]
The proof of the theorem is now nearly complete.  Indeed, applying Lemma~\ref{lem:hfroms} we obtain the bound
\[
e(G)\,\ge \,(k+1)|(k+1)\wedge A|+\frac{2(n-2k)|S|}{k+3}\, ,
\]
which combined with \eqref{sthing} yields
\[
(k+1)|(k+1)\wedge A|+\frac{2(n-2k)|S|}{k+3}\le (n-k)|k\wedge A|+k|S|\, .
\]
Now, since $n\ge (k^2+7k)/2$ the second term on the left hand side is at least the second term on the right hand side.  Thus,
\[
(k+1)|(k+1)\wedge A|\le (n-k)|k\wedge A|\, ,
\]
completing the proof of Theorem~\ref{thm:main}.

\textbf{Acknowledgements} This work began during the course of Imre Ruzsa on ``Sumsets and Structure''\cite{R2} at the CRM, Barcelona.  Thanks are due to Imre for his excellent course and stimulating problems, and to the organisers and the CRM for their hospitality.
\vspace{0.2cm}

\end{document}